\newtheorem{theorem}{Theorem}[section]
\newtheorem{proposition}[theorem]{Proposition}
\newtheorem{lemma}[theorem]{Lemma}
\newtheorem{corollary}[theorem]{Corollary}
\newtheorem{remark}{Remark}
\newcommand{\al}{\alpha}
\newcommand{\bt}{\beta}
\newcommand{\s}{\sigma}
\newcommand{\be}{\begin{equation}}
\newcommand{\ee}{\end{equation}}
\newcommand{\bea}{\begin{eqnarray}}
\newcommand{\eea}{\end{eqnarray}}
\newcommand{\no}{\nonumber}
\numberwithin{equation}{section}
\begin{document}

\title{\Large Asymptotic Properties of Some Freud Polynomials}
\author{Chao Min\thanks{School of Mathematical Sciences, Huaqiao University, Quanzhou 362021, China; chaomin@hqu.edu.cn}, Liwei Wang\thanks{School of Mathematical Sciences, Huaqiao University, Quanzhou 362021, China; liweiwang@stu.hqu.edu.cn}\: and Yang Chen\thanks{Department of Mathematics, University of Macau, Macau, China; yangbrookchen@yahoo.co.uk}}


\date{November 12, 2023}
\maketitle
\begin{abstract}
We study the asymptotic properties of monic orthogonal polynomials (OPs) with respect to some Freud weights when the degree of the polynomial tends to infinity, including the asymptotics of the recurrence coefficients, the nontrivial leading coefficients of the monic OPs, the associated Hankel determinants and the squares of $L^2$-norm of the monic OPs. These results are derived from the combination of the ladder operator approach, Dyson's Coulomb fluid approach and some recent results in the literature.
\end{abstract}

$\mathbf{Keywords}$: Orthogonal polynomials; Freud weights; Recurrence coefficients; Ladder operators;

Hankel determinants; Asymptotic expansions.

$\mathbf{Mathematics\:\: Subject\:\: Classification\:\: 2020}$: 33C45, 42C05, 41A60.

\section{Introduction}
In the seminal paper \cite{Magnus}, Magnus studied the relationship between recurrence coefficients of semi-classical orthogonal polynomials (OPs) and Painlev\'{e} equations. In particular, he considered the simplest nontrivial Freud weight
\be\label{wei}
w(x)=w(x;t)=\mathrm{e}^{-x^{2m}+tx^{2}},\qquad x\in \mathbb{R}
\ee
with $t\in \mathbb{R},\;m=2, 3$. This is indeed a semi-classical weight since it satisfies the Pearson equation (see e.g. \cite[Section 1.1.1]{VanAssche})
$$
\frac{d}{dx}(\s(x)w(x))=\tau(x)w(x),
$$
with $\s(x)=x,\; \tau(x)=1+2t x^2-2mx^{2m}$.
In the $m=2$ case, it was shown that the recurrence coefficient of the corresponding OPs satisfies the Painlev\'{e} IV equation and discrete Painlev\'{e} I equation simultaneously. This fact was first observed by Fokas, Its and Kitaev \cite{Fokas}; see also \cite{FIZ,Filipuk,Clarkson3}. In the $m=3$ case, it is not clear if there is any direct relation between the recurrence coefficient of the corresponding OPs and the (continuous) Painlev\'{e} equations, but it is related to the discrete Painlev\'{e} I hierarchy \cite{Clarkson2,Fokas,WZC}. For the general integer $m\geq 2$ case, see \cite{Clarkson4,Fokas2}.

Let $P_{n}(x;t),\; n=0,1,2,\ldots$, be the \textit{monic} polynomials of degree $n$ orthogonal with respect to the weight (\ref{wei}) with general integer $m\geq 2$, such that
\be\label{or}
\int_{-\infty}^{\infty}P_{j}(x;t)P_{k}(x;t)w(x;t)dx=h_{j}(t)\delta_{jk},\qquad j, k=0,1,2,\ldots,
\ee
where $h_j(t)>0$ is the square of $L^2$-norm of the OPs, and $\delta_{jk}$ is the Kronecker delta.
Note that the weight $w(x; t)$ is even, then we have \cite[p. 21]{Chihara}
$$
P_n(-x;t)=(-1)^n P_n(x;t).
$$
That means $P_n(x;t)$ contains only even (odd) powers of $x$ when $n$ is even (odd). It follows that $P_{n}(x;t)$ has the expansion
\be\label{expan}
P_{n}(x;t)=x^{n}+\mathrm{p}(n,t)x^{n-2}+\cdots,\qquad n=0,1,2,\ldots,
\ee
where $\mathrm{p}(n,t)$ is the nontrivial leading coefficient of $P_{n}(x;t)$, and $\mathrm{p}(0,t)=\mathrm{p}(1,t)=0$.

A fundamental property of OPs is that they satisfy the three-term recurrence relation. For the problem (\ref{or}), it reads \cite[p. 18-21]{Chihara}
\be\label{rr}
xP_{n}(x;t)=P_{n+1}(x;t)+\beta_{n}(t)P_{n-1}(x;t),
\ee
supplemented by the initial conditions $P_{0}(x;t)=1,\; \beta_{0}(t)P_{-1}(x;t)=0$. From (\ref{or}), (\ref{expan}) and (\ref{rr}), the recurrence coefficient $\bt_n(t)$ can be expressed in the following two forms:
\be\label{al}
\beta_{n}(t)=\mathrm{p}(n,t)-\mathrm{p}(n+1,t),
\ee
\be\label{be}
\beta_{n}(t)=\frac{h_{n}(t)}{h_{n-1}(t)}.
\ee
Taking a telescopic sum of (\ref{al}), we get an important identity
\be\label{sum}
\sum_{j=0}^{n-1}\beta_j(t)=-\mathrm{p}(n,t).
\ee
In addition, it can be seen from (\ref{al}) that $\bt_0(t)=0$.

From (\ref{or}) we have
$$
h_{n}(t)=\int_{-\infty}^{\infty}P_{n}^2(x;t)w(x;t)dx.
$$
Taking a derivative with respect to $t$ gives
$$
h_n'(t)=\int_{-\infty}^{\infty}x^2P_{n}^2(x;t)w(x;t)dx=(\bt_n(t)+\beta_{n+1}(t))h_n(t),
$$
i.e.,
\be\label{eq1}
\frac{d}{dt}\ln h_n(t)=\bt_n(t)+\bt_{n+1}(t).
\ee
The combination of (\ref{be}) and (\ref{eq1}) produces the Volterra lattice (also known as the discrete KdV equation, the Kac-van Moerbeke lattice and the Langmuir lattice) \cite[Theorem 2.4]{VanAssche}
\begin{equation}\label{eq100}
\bt_n'(t)=\bt_n(t)(\bt_{n+1}(t)-\bt_{n-1}(t)).
\end{equation}
On the other hand, from (\ref{or}) we also have
$$
\int_{-\infty}^{\infty}P_{n}(x;t)P_{n-2}(x;t)w(x;t)dx=0.
$$
By taking a derivative with respect to $t$, we obtain
$$
\frac{d}{d t} \mathrm{p}(n, t)=-\frac{1}{h_{n-2}(t)} \int_{-\infty}^{\infty}x^2P_n(x, t) P_{n-2}(x;t) w(x;t) d x=-\frac{h_n(t)}{h_{n-2}(t)}=-\bt_n(t)\bt_{n-1}(t).
$$

The Hankel determinant generated by the weight (\ref{wei}) with general integer $m\geq 2$ is defined by
$$
D_n(t):=\operatorname{det}\left(\mu_{j+k}(t)\right)_{j, k=0}^{n-1}=\left|\begin{array}{cccc}
\mu_0(t) & \mu_1(t) & \cdots & \mu_{n-1}(t) \\
\mu_1(t) & \mu_2(t) & \cdots & \mu_n(t) \\
\vdots & \vdots & & \vdots \\
\mu_{n-1}(t) & \mu_n(t) & \cdots & \mu_{2 n-2}(t)
\end{array}\right|,
$$
where $\mu_j(t)$ is the $j$th moment given by
\begin{equation*}
\mu_j(t):=\int_{-\infty}^{\infty} x^j w(x ; t) d x.
\end{equation*}
It was shown in \cite{Clarkson4} that the moments are related to the generalized hypergeometric functions.

As a well-known fact, the Hankel determinant $D_n(t)$ can be expressed in the following form of the product of $h_j(t)$ \cite[(2.1.6)]{Ismail}:
\begin{equation}\label{dnyuhj}
D_n(t)=\prod_{j=0}^{n-1} h_j(t).
\end{equation}
An easy consequence of (\ref{be}) and (\ref{dnyuhj}) shows the relation between the recurrence coefficient $\bt_n(t)$ and the Hankel determinant $D_n(t)$:
$$
\bt_n(t)=\frac{D_{n+1}(t)D_{n-1}(t)}{D_{n}^2(t)}.
$$

In random matrix theory (RMT) \cite{Mehta,Deift,Forrester}, the Hankel determinant $D_n(t)$ is equal to the partition function of the corresponding Freud unitary ensemble \cite[(2.2.11)]{Szego}, namely,
\be\label{dnt}
D_n(t)=\frac{1}{n!}\int_{(-\infty,\infty)^n}\prod_{1\leq i<j\leq n}(x_i-x_j)^2\prod_{k=1}^n \mathrm{e}^{-x_k^{2m}+tx_k^2}dx_k.
\ee
In the $t=0$ case, the large $n$ asymptotics (without higher order terms) of $D_n(0)$ in (\ref{dnt}) can be derived from a recent result of Claeys, Krasovsky and Minakov \cite{CKM}.
\begin{lemma}
As $n\rightarrow\infty$, the following asymptotics of $D_n(0)$ hold for $m\in \mathbb{Z}^{+}$,
\be\label{dn0}
\ln D_n(0)=\frac{n^2\ln n}{2m}+\left(\ln \frac{A}{2}-\frac{3}{4m}\right)n^2+n\ln (2\pi)-\frac{\ln n}{12}+c(m)+o(1),
\ee
where $A$ and $c(m)$ are constants independent of $n$, given by
$$
A=\left(\frac{\Gamma(m)\Gamma\left(\frac{1}{2}\right)}{\Gamma\left(m+\frac{1}{2}\right)}\right)^{\frac{1}{2m}}
$$
and
$$
c(m)=\zeta'(-1)-\frac{\ln m}{12}.
$$
Here $\zeta(\cdot)$ is the Riemann zeta function.
\end{lemma}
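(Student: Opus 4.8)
The plan is to reduce $D_n(0)$ to a one-cut unitary ensemble partition function in the standard $\mathrm{e}^{-nV}$ scaling and then to invoke the asymptotic expansion of the partition function established in \cite{CKM}. First I would rescale the integral in (\ref{dnt}) with $t=0$ by setting $x_k=n^{1/(2m)}y_k$: the Vandermonde factor contributes $n^{(n^2-n)/(2m)}$, the $n$ differentials contribute $n^{n/(2m)}$, and $x_k^{2m}=ny_k^{2m}$, so that
\[
D_n(0)=n^{\frac{n^2}{2m}}\,\widetilde{D}_n,\qquad \widetilde{D}_n:=\frac{1}{n!}\int_{\mathbb{R}^n}\prod_{1\le i<j\le n}(y_i-y_j)^2\prod_{k=1}^{n}\mathrm{e}^{-ny_k^{2m}}\,dy_k.
\]
This already accounts for the leading term $\frac{n^2\ln n}{2m}$, so everything reduces to the large-$n$ expansion of $\ln\widetilde{D}_n$, which is the partition function of the $\beta=2$ ensemble with real-analytic, convex potential $V(y)=y^{2m}$.

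Since $V$ is even, convex and entire, the associated equilibrium measure $\mu_V$ is one-cut regular: it is supported on a single symmetric interval $[-A,A]$, is positive in the interior, and vanishes like a square root at the soft edges $\pm A$. Hence the hypotheses of \cite{CKM} hold for every integer $m\ge 1$, and their result yields an expansion of the form
\[
\ln\widetilde{D}_n=-\,\mathcal{E}_V\,n^2+n\ln(2\pi)-\tfrac{1}{12}\ln n+\zeta'(-1)+F_1+o(1),
\]
where $\mathcal{E}_V=\inf_{\mu}\big(\iint\log|x-y|^{-1}d\mu(x)d\mu(y)+\int V\,d\mu\big)$ is the minimal weighted logarithmic energy and $F_1$ is the genus-one correction, normalized so that it vanishes for the Gaussian potential. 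Adding $\frac{n^2\ln n}{2m}$ and comparing with (\ref{dn0}), the lemma reduces to the two identities $\mathcal{E}_V=\ln\frac{2}{A}+\frac{3}{4m}$ and $F_1=-\frac{\ln m}{12}$.

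Then I would evaluate these two constants from the (classical) equilibrium problem for $V(y)=y^{2m}$. The Mhaskar--Rakhmanov--Saff endpoint $A$ is fixed by the normalization $\frac{1}{2\pi}\int_{-A}^{A}yV'(y)(A^2-y^2)^{-1/2}dy=1$, which via a Wallis integral gives exactly $A^{2m}=\Gamma(m)\Gamma(1/2)/\Gamma(m+1/2)$, i.e. $A$ is the constant in the statement; the density is $\mu_V'(y)=\frac{1}{2\pi}M(y)\sqrt{A^2-y^2}$ with $M$ the polynomial part of $V'(y)/\sqrt{y^2-A^2}$ at infinity, and the identity $\sum_{k=0}^{m-1}\binom{2k}{k}4^{-k}=(2m-1)\binom{2m-2}{m-1}4^{1-m}$ collapses the edge value to $M(\pm A)=4m/A^2$. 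From the Euler--Lagrange identity $2\int\log|x-y|^{-1}d\mu_V(y)+V(x)=F$ on $[-A,A]$ one gets $\mathcal{E}_V=\tfrac12 F+\tfrac12\int V\,d\mu_V$; here $\int V\,d\mu_V=\tfrac{1}{2m}$ follows by differentiating the homogeneity relation $\mathcal{E}_{\lambda V}=\mathcal{E}_V+\tfrac{1}{2m}\log\lambda$ at $\lambda=1$, while $F=-2\log\frac{A}{2}+\frac1m$ comes from the standard Robin-constant computation (equivalently, from $F=\frac1\pi\int_{-A}^{A}V(x)(A^2-x^2)^{-1/2}dx-2\log\frac{A}{2}$), so $\mathcal{E}_V=-\log\frac{A}{2}+\frac{3}{4m}$. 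Finally, the standard one-cut genus-one formula $F_1=-\frac{1}{24}\log\!\big(M(A)M(-A)(A/2)^4\big)$ gives $F_1=-\frac{1}{24}\log m^2=-\frac{\ln m}{12}$. Assembling the pieces produces (\ref{dn0}) with $c(m)=\zeta'(-1)-\frac{\ln m}{12}$.

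The substantive analytic content --- the Riemann--Hilbert/loop-equation derivation of the $\zeta'(-1)$ and $-\tfrac{1}{12}\ln n$ terms --- is entirely inside \cite{CKM}; on our side the only real work is the constant-matching, and the step I expect to be most error-prone is keeping the normalization conventions consistent (the placement of the $1/n!$, the $\mathrm{e}^{-nV}$ versus $\mathrm{e}^{-V}$ scaling, and the precise form of the genus-one formula) so that the problem-specific constants assemble into exactly $\ln\frac{A}{2}-\frac{3}{4m}$ and $-\frac{\ln m}{12}$. A reassuring consistency check is $m=1$: there $A=\sqrt{2}$, the ensemble is Gaussian, $D_n(0)=2^{-n^2/2}(2\pi)^{n/2}G(n+1)$ exactly, and (\ref{dn0}) reduces to the classical asymptotic expansion of the Barnes $G$-function. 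If instead the relevant theorem of \cite{CKM} is stated directly for the weight $\mathrm{e}^{-x^{2m}}$, only the rescaling step is needed and the constants can be read off.
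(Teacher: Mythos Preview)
Your approach is correct and, at its core, coincides with the paper's: perform the rescaling $x_k=n^{1/(2m)}y_k$ to extract the $\frac{n^2\ln n}{2m}$ term and reduce to the $\mathrm{e}^{-nV}$ ensemble with $V(y)=y^{2m}$, then invoke \cite{CKM}. The only difference is one of economy: the paper applies Proposition~1.1 and Theorem~1.9 of \cite{CKM} directly, since those results are already stated for the Freud potential $y^{2m}$ with the constants $A$ and $c(m)$ explicit, so no further computation is needed. Your additional potential-theoretic work (MRS endpoint, Robin constant, $\int V\,d\mu_V=\tfrac{1}{2m}$, and the genus-one formula yielding $-\tfrac{\ln m}{12}$) is correct and would be required had \cite{CKM} been stated only for a generic one-cut regular $V$; but, exactly as you anticipated in your final remark, the specific Freud case is what \cite{CKM} treats, so the paper's proof stops after the rescaling and a citation.
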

\begin{proof}
With a simple change of variables, $x_j=n^{\frac{1}{2m}}y_j,\; j=1,2,...,n$, it can be found from (\ref{dnt}) (with $t=0$) that
$$
D_n(0)=n^{\frac{n^2}{2m}}\cdot\frac{Z_n}{n!},
$$
where
$$
Z_n=\int_{(-\infty,\infty)^n}\prod_{1\leq i<j\leq n}(y_i-y_j)^2\prod_{k=1}^n \mathrm{e}^{-ny_k^{2m}}dy_k.
$$
Using Proposition 1.1 and Theorem 1.9 in \cite{CKM}, we obtain the desired result.
\end{proof}
\begin{remark}
We mention that the large $n$ asymptotics (without higher order terms) of Hankel determinants for very general Gaussian, Laguerre and Jacobi type weights have been obtained in a recent work by Charlier and Gharakhloo \cite{CG} using the Riemann-Hilbert approach.
\end{remark}

The ladder operator approach developed by Chen and Ismail \cite{Chen1997} (see also \cite{ChenIsmail2005}) is very useful to study the recurrence coefficients of OPs and the associated Hankel determinants. For the monic OPs with respect to the weight (\ref{wei}) with general integer $m\geq 2$, they satisfy a pair of ladder operator equations:
$$
\left(\frac{d}{dx}+B_{n}(x)\right)P_{n}(x)=\beta_{n}A_{n}(x)P_{n-1}(x),
$$
$$
\left(\frac{d}{dx}-B_{n}(x)-\mathrm{v}'(x)\right)P_{n-1}(x)=-A_{n-1}(x)P_{n}(x),
$$
where $\mathrm{v}(x):=-\ln w(x)$ is the potential and
\be\label{an}
A_{n}(x):=\frac{1}{h_{n}}\int_{-\infty}^{\infty}\frac{\mathrm{v}'(x)-\mathrm{v}'(y)}{x-y}P_{n}^{2}(y)w(y)dy,
\ee
\be\label{bn}
B_{n}(x):=\frac{1}{h_{n-1}}\int_{-\infty}^{\infty}\frac{\mathrm{v}'(x)-\mathrm{v}'(y)}{x-y}P_{n}(y)P_{n-1}(y)w(y)dy.
\ee
For convenience, we shall not display the $t$-dependence of many quantities most of the time.

Using the definitions (\ref{an}) and (\ref{bn}), it can be proved that $A_n(x)$ and $B_n(x)$ satisfy the compatibility conditions \cite{Chen1997,ChenIsmail2005}
\be
B_{n+1}(x)+B_{n}(x)=x A_{n}(x)-\mathrm{v}'(x), \tag{$S_{1}$}
\ee
\be
1+x(B_{n+1}(x)-B_{n}(x))=\beta_{n+1}A_{n+1}(x)-\beta_{n}A_{n-1}(x). \tag{$S_{2}$}
\ee
A suitable combination of ($S_{1}$) and ($S_{2}$) produces the sum rule \cite{ChenIts}
\be
B_{n}^{2}(x)+\mathrm{v}'(x)B_{n}(x)+\sum_{j=0}^{n-1}A_{j}(x)=\beta_{n}A_{n}(x)A_{n-1}(x), \tag{$S_{2}'$}
\ee
which usually gives us a better insight into the recurrence coefficients of the OPs compared with equation $(S_{2})$.

The rest of the paper is organized as follows. In Section 2, we apply the ladder operator approach to the monic OPs with the weight (\ref{wei}) when $m=2$, and obtain some important identities for the recurrence coefficients. Based on these results, we derive the large $n$ asymptotic expansions of the recurrence coefficient $\bt_n(t)$, the nontrivial leading coefficient $\mathrm{p}(n,t)$, the Hankel determinant $D_n(t)$ and the square of $L^2$-norm of the monic OPs, $h_n(t)$, for fixed $t\in \mathbb{R}$. In Section 3, we repeat the development of Section 2, but for the monic OPs with the weight (\ref{wei}) in the $m=3$ case. The conclusions and discussion are given in Section 4.


\section{The $m=2$ Case}
In this section, we consider the $m=2$ case. The weight function reads
\be\label{wei2}
w(x;t)=\mathrm{e}^{-x^{4}+tx^{2}},\qquad x\in \mathbb{R}
\ee
with $t\in \mathbb{R}$. We have
$$
\mathrm{v}(x)=-\ln w(x)=x^4-t x^2
$$
and
\be\label{vp1}
\frac{\mathrm{v}^{\prime}(x)-\mathrm{v}^{\prime}(y)}{x-y}=4 x^2+4 x y+4 y^2-2 t.
\ee
Substituting (\ref{vp1}) into (\ref{an}) and (\ref{bn}), we obtain
\begin{equation}\label{an1}
	A_n(x)=4 x^2-2 t+4 \beta_n+4 \beta_{n+1},
\end{equation}
\begin{equation}\label{bn1}
	B_n(x)=4 x \beta_n,
\end{equation}
where use has been made of the orthogonality and the three-term recurrence relation.

With the expressions of $A_n(x)$ and $B_n(x)$ in (\ref{an1}) and (\ref{bn1}), we find that ($S_{1}$) holds automatically. From ($S_2'$) we obtain the following two equations:
\begin{equation}\label{s1}
4 \beta_n\left(\beta_{n-1}+\beta_n+\beta_{n+1}-\frac{t}{2}\right)=n,
\end{equation}
\begin{equation}\label{s2}
\sum_{j=0}^{n-1}\left(\beta_j+\beta_{j+1}\right)-\frac{nt}{2}=\beta_n\left[t^2-2 t\left(\beta_{n-1}+2 \beta_n+\beta_{n+1}\right)+4\left(\beta_{n-1}+\beta_n\right)\left(\beta_n+\beta_{n+1}\right)\right].
\end{equation}
Equation (\ref{s1}) is known as the discrete Painlev\'{e} I equation and it was derived by many authors; see e.g. \cite{BS,Magnus}, \cite[Section 2.1]{VanAssche} and the $t=0$ case in \cite{Freud,Shohat,VanAssche2}.

\begin{remark}
The combination of (\ref{eq100}) and (\ref{s1}) will show that $\bt_n$ satisfies the Painlev\'{e} IV equation; see, for example, \cite[p. 231]{Magnus}.
\end{remark}
In the following, we will study several large $n$ asymptotics of our problem for fixed $t\in \mathbb{R}$. First we show the result about the asymptotics of the recurrence coefficient.
\begin{theorem}\label{thm1}
For fixed $t\in \mathbb{R}$, the recurrence coefficient $\beta_{ n }(t)$ has the asymptotic expansion as $n\rightarrow\infty$,
\begin{equation}\label{betanzhangk}
\beta_n(t)= \frac{n^{1 / 2}}{2\sqrt{3}}+\frac{ t}{12}+\frac{t^2}{48\sqrt{3}\: n^{1/2}}-\frac{t^4-48}{2304 \sqrt{3}\: n^{3/2}}+\frac{t}{288n^{2}}+\frac{t^2\left(t^4-144\right)}{55296\sqrt{3}\:n^{5/2}}-\frac{t^3}{1728n^3}+O\big(n^{-7/2}\big).
\end{equation}
\end{theorem}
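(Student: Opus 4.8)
The plan is to extract the asymptotics directly from the discrete Painlev\'e~I equation~(\ref{s1}), namely $4\beta_n(\beta_{n-1}+\beta_n+\beta_{n+1}-t/2)=n$, by the standard ansatz-and-match method. First I would posit that $\beta_n(t)$ admits an asymptotic series in descending half-integer powers of $n$ of the form
$$
\beta_n(t)=a_1 n^{1/2}+a_0+a_{-1}n^{-1/2}+a_{-2}n^{-1}+a_{-3}n^{-3/2}+a_{-4}n^{-2}+a_{-5}n^{-5/2}+a_{-6}n^{-3}+O\big(n^{-7/2}\big),
$$
with coefficients $a_k=a_k(t)$ to be determined. The key auxiliary step is to expand $\beta_{n\pm1}(t)$ by substituting $n\mapsto n\pm1$ into this series and Taylor-expanding each power $(n\pm1)^{r}=n^{r}(1\pm1/n)^{r}$ to sufficiently high order; then $\beta_{n-1}+\beta_n+\beta_{n+1}$ becomes $3\beta_n$ plus correction terms that are shifted down by integer powers of $n$, i.e. $\beta_{n-1}+\beta_{n+1}-2\beta_n=\beta_n''+\tfrac{1}{12}\beta_n^{(4)}+\cdots$ in the discrete-second-difference sense, which in the power series translates to explicit combinations of the $a_k$.

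Next I would insert these expansions into~(\ref{s1}), multiply out, and collect coefficients of like powers of $n$: the leading balance is $4a_1\cdot 3a_1\,n=n$, giving $a_1=1/(2\sqrt3)$ (taking the positive root, since $\beta_n>0$). The $n^{1/2}$ coefficient then determines $a_0$; here one finds $12a_1 a_0 + 4a_1(-t/2)\cdot$(appropriate pieces) $=0$, yielding $a_0=t/12$. Continuing, each successive power of $n^{-1/2}$ gives a \emph{linear} equation for the next coefficient $a_{-k}$ in terms of the already-known lower-order ones (because the highest-order unknown always appears multiplied by the leading term $4\cdot3a_1$), so the recursion is triangular and solvable step by step; this should reproduce $a_{-1}=t^2/(48\sqrt3)$, $a_{-2}=0$, $a_{-3}=-(t^4-48)/(2304\sqrt3)$, $a_{-4}=t/288$, $a_{-5}=t^2(t^4-144)/(55296\sqrt3)$, $a_{-6}=-t^3/1728$, matching~(\ref{betanzhangk}).

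The one genuinely non-trivial point — beyond the bookkeeping — is \emph{justifying} that such an asymptotic expansion exists in the first place, rather than merely being a formal solution of~(\ref{s1}); the difference equation alone has other (oscillatory, period-two) solutions, so one needs external input to pin down the branch. I would supply this by appealing to the already-established Lemma on $\ln D_n(0)$ together with the identity $\beta_n=D_{n+1}D_{n-1}/D_n^2$ and the relation~(\ref{eq100}) (or equivalently~(\ref{be}) and the $t$-derivative formulas), which fix the leading behavior $\beta_n\sim n^{1/2}/(2\sqrt3)$ and rule out the spurious solutions; once the leading term and the existence of a regular expansion are known, equation~(\ref{s1}) forces all higher coefficients uniquely as above. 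The main obstacle is therefore conceptual (legitimizing the ansatz and selecting the correct solution branch) rather than computational; after that, the derivation of~(\ref{betanzhangk}) is a finite, if somewhat tedious, linear recursion that I would carry out to order $n^{-3}$ and truncate with the stated $O(n^{-7/2})$ error.
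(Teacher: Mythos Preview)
Your proposal is correct and follows essentially the same approach as the paper: the paper's proof simply cites Clarke--Shizgal and Clarkson--Jordaan, notes that the discrete Painlev\'e~I equation~(\ref{s1}) is the key ingredient, and states that more terms are produced by their method---which is precisely the ansatz-and-match procedure you describe. If anything, you are more careful than the paper in flagging the need to justify the existence of the expansion and to select the correct (non-oscillatory) branch; the paper delegates this entirely to the cited references.
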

\begin{proof}
See Clarke and Shizgal \cite[p. 144]{CS} and also Clarkson and Jordaan \cite[Theorem 4.1]{Clarkson1}. The discrete Painlev\'{e} I equation (\ref{s1}) played an important role in the proof. We produce more terms by using their method here.
\end{proof}

\begin{theorem}
The nontrivial leading coefficient $\mathrm{p}(n,t)$ has the large $n$ asymptotic expansion
\begin{align}
\mathrm{p}(n, t)  =&-\frac{n^{3 / 2}}{3\sqrt{3}}-\frac{nt}{12}-\frac{\left(t^2-6\right)  n^{1 / 2}}{24\sqrt{3}}-\frac{t\left(t^2-9\right)}{216}-\frac{t^4-12 t^2-24 }{1152\sqrt{3}\: n^{1 / 2}} \no\\[8pt]
&+\frac{t}{288 n}+\frac{t^6-18 t^4-72 t^2+864 }{82944\sqrt{3}\: n^{3 / 2}}-\frac{t(t^2-6)}{3456n^2}+O\big(n^{-5/2}\big).\no
\end{align}
\end{theorem}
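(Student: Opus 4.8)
The plan is to obtain $\mathrm{p}(n,t)$ directly from Theorem~\ref{thm1} by means of the telescopic identity (\ref{sum}), namely $\mathrm{p}(n,t)=-\sum_{j=0}^{n-1}\beta_j(t)$. Since Theorem~\ref{thm1} furnishes a full asymptotic expansion of $\beta_n(t)$ in descending half-integer powers of $n$, the task reduces to summing such an expansion term by term. Concretely, write $\beta_j(t)=\sum_{k\geq -1} c_k(t)\, j^{-k/2}$ (with $c_{-1}=\tfrac{1}{2\sqrt3}$, $c_0=\tfrac{t}{12}$, etc., read off from (\ref{betanzhangk})); then $\mathrm{p}(n,t)=-\sum_{k}c_k(t)\sum_{j=0}^{n-1}j^{-k/2}$, and each inner sum $\sum_{j=1}^{n-1}j^{-k/2}$ has a known large-$n$ expansion supplied by the Euler--Maclaurin formula. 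The leading powers $n^{3/2}$, $n$, $n^{1/2}$ in $\mathrm{p}(n,t)$ come from summing $j^{1/2}$, $j^{0}$, $j^{-1/2}$ respectively, and the constant term $-\tfrac{t(t^2-9)}{216}$ absorbs both the constant part of the $j^{-1/2}$-sum (involving $\zeta(1/2)$-type corrections that cancel appropriately) and the $\zeta$-regularized contributions of the convergent tails.

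First I would record the Euler--Maclaurin expansions I need: for $\alpha\notin\{1\}$,
\be
\sum_{j=1}^{n-1} j^{-\alpha}=\frac{n^{1-\alpha}}{1-\alpha}-\frac{1}{2}n^{-\alpha}+\zeta(\alpha)+\frac{\alpha}{12}n^{-\alpha-1}+\cdots,\no
\ee
valid as an asymptotic series in $n$, with the understanding that for $\alpha=-1/2$ the ``$\zeta(\alpha)$'' term is $\zeta(-1/2)$ and so on, and that for $\alpha\le 0$ the series still makes sense as the Euler--Maclaurin correction to $\int_0^{n}x^{-\alpha}dx$. Applying this to each monomial in (\ref{betanzhangk}) and collecting like powers of $n$ down to $n^{-5/2}$ yields a candidate expansion for $\mathrm{p}(n,t)$. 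Two internal checks then validate the computation with no extra work: the coefficient of every power must be consistent with $\beta_n(t)=\mathrm{p}(n,t)-\mathrm{p}(n+1,t)$ from (\ref{al}), which is just the finite-difference inverse of the summation and so is automatic; and, independently, one can verify $\frac{d}{dt}\mathrm{p}(n,t)=-\beta_n(t)\beta_{n-1}(t)$ using the already-known relation derived in the introduction, differentiating the claimed expansion in $t$ and comparing with the product of two copies of (\ref{betanzhangk}) shifted by one in $n$. Either check pins down the $t$-dependent constant term and guards against arithmetic slips.

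The main obstacle is bookkeeping rather than conceptual: the constant (i.e.\ $n^0$) term of $\mathrm{p}(n,t)$ receives contributions from three different places — the Euler--Maclaurin ``$\zeta$''-constant of the $j^{-1/2}$ sum, the $\zeta$-regularized sums of all the genuinely convergent tails ($j^{-3/2}$, $j^{-5/2}$, $\dots$, plus the $j^{-2},j^{-3}$ terms), and the lower-order Euler--Maclaurin corrections to the divergent sums — and getting $-\tfrac{t(t^2-9)}{216}$ exactly right requires care that these regularized pieces combine correctly. A clean way to sidestep the delicate $\zeta$-value accounting is to fix the constant term a posteriori: compute everything else from Euler--Maclaurin, leave the $n^0$ coefficient as an unknown function $\kappa(t)$, and determine $\kappa(t)$ from the ODE $\partial_t \mathrm{p}(n,t)=-\beta_n\beta_{n-1}$ together with the obvious initial data $\mathrm{p}(n,0)$ (the $t=0$ Freud case, whose leading-coefficient asymptotics are classical, cf.\ the references cited for Theorem~\ref{thm1}). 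This reduces the proof to a short verification and a one-line integration in $t$, and it is the route I would present.
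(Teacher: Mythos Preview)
Your approach is workable but it is \emph{not} the paper's route. The paper does not sum the $\beta_j$'s at all: instead it feeds the telescopic identity (\ref{sum}) into equation (\ref{s2}) (the $x^0$ coefficient of the sum rule $(S_2')$), which eliminates $\sum_{j=0}^{n-1}(\beta_j+\beta_{j+1})$ and, after simplifying with (\ref{s1}), produces the closed, \emph{local} identity
\[
\mathrm{p}(n,t)=\tfrac12\beta_n+t\beta_n^{2}-2\beta_n(\beta_{n-1}+\beta_n)(\beta_n+\beta_{n+1}).
\]
One then simply inserts (\ref{betanzhangk}) for $\beta_{n-1},\beta_n,\beta_{n+1}$ and expands. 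This bypasses Euler--Maclaurin entirely, determines the constant term $-t(t^2-9)/216$ on the nose with no $\zeta$-values, no ODE-in-$t$, and no external input about $\mathrm{p}(n,0)$.

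Your route does give the correct coefficient of every nonzero power of $n$ by straightforward Euler--Maclaurin, but, as you correctly diagnose, the $n^0$ coefficient is genuinely undetermined by the summation: the asymptotic formula (\ref{betanzhangk}) is only valid for large $j$, so the summed constant carries an unknown contribution from the small-$j$ discrepancy (not merely from $\zeta$-regularized values). Your proposed fix via $\partial_t\mathrm{p}(n,t)=-\beta_n\beta_{n-1}$ plus the $t=0$ Freud initial data is legitimate, but it imports an outside result that the paper's argument does not need. In short: your proof can be made to work, but the ladder-operator identity (\ref{s2}) is the key lemma you are missing, and it turns a delicate summation into a three-line substitution.
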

\begin{proof}
It follows from (\ref{s2}) and (\ref{sum}) that $\mathrm{p}(n,t)$ can be expressed in terms of $\bt_n$:
\begin{equation}\label{pn1}
\mathrm{p}(n, t)=\frac{1}{2} \beta_n+t \beta_n^2-2 \beta_n\left(\beta_{n-1}+\beta_n\right)\left(\beta_n+\beta_{n+1}\right),
\end{equation}
where we have used (\ref{s1}) to simplify the result.
Substituting (\ref{betanzhangk}) into (\ref{pn1}), we obtain the desired result by taking a large $n$ limit.
\end{proof}
\begin{remark}\label{re}
Eliminating $\bt_{n+1}$ from (\ref{s1}) and (\ref{pn1}), we find
$$
2\mathrm{p}(n, t)= \beta_n+2t \beta_n^2-(\beta_{n-1}+\beta_n)(n+2t\bt_n-4\bt_{n-1}\bt_n).
$$
By making use of (\ref{al}), we obtain the second-order difference equation satisfied by $\mathrm{p}(n,t)$:
\begin{align}
&\big(\mathrm{p}(n-1)-\mathrm{p}(n+1)\big)\left[n+2t\big(\mathrm{p}(n)-\mathrm{p}(n+1)\big)
-4\big(\mathrm{p}(n-1)-\mathrm{p}(n)\big)\big(\mathrm{p}(n)-\mathrm{p}(n+1)\big)\right]\no\\
&+\mathrm{p}(n)+\mathrm{p}(n+1)-2t\big(\mathrm{p}(n)-\mathrm{p}(n+1)\big)^2=0,\no
\end{align}
where we do not show the $t$-dependence of $\mathrm{p}(n,t)$ and $\mathrm{p}(n\pm 1,t)$ for simplicity.
\end{remark}
Before we consider the large $n$ asymptotics of the Hankel determinant $D_n(t)$, we derive the asymptotics (with higher order terms) in the special case $t=0$.
\begin{proposition}
The Hankel determinant $D_n(0)$ has the large $n$ asymptotic expansion
\begin{align}\label{dn0a}
\ln D_n(0)=&\:\frac{n^2\ln n}{4} -\frac{(3+2\ln 12)n^2}{8}+n \ln (2 \pi)-\frac{\ln n}{12} +\zeta^{\prime}(-1)-\frac{\ln 2}{12}\no\\[8pt]
&-\frac{89}{11520 n^2}+\frac{6619}{2322432 n^4}+O(n^{-6}).
\end{align}
\end{proposition}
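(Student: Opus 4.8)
The plan is to bootstrap from the asymptotics~(\ref{dn0}). Specialising that formula to $m=2$, where $A=(4/3)^{1/4}$ so that $\ln\frac{A}{2}-\frac38=-\frac{3+2\ln 12}{8}$ and $c(2)=\zeta'(-1)-\frac{\ln 2}{12}$, already yields every term of~(\ref{dn0a}) down to the constant; what remains is to replace the error term $o(1)$ by the explicit tail $-\frac{89}{11520\,n^{2}}+\frac{6619}{2322432\,n^{4}}+O(n^{-6})$. For this I would use the exact identity for the Hankel determinants $\beta_n(0)=D_{n+1}(0)D_{n-1}(0)/D_n(0)^{2}$, which follows from (\ref{be}) and (\ref{dnyuhj}), in logarithmic form
\[
\ln D_{n+1}(0)-2\ln D_n(0)+\ln D_{n-1}(0)=\ln \beta_n(0),
\]
together with a sufficiently long asymptotic expansion of $\beta_n(0)$.

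The first step is to extend the expansion of the recurrence coefficient at $t=0$ beyond the terms recorded in Theorem~\ref{thm1}. Setting $t=0$ in the discrete Painlev\'{e}~I equation~(\ref{s1}) gives $4\beta_n(\beta_{n-1}+\beta_n+\beta_{n+1})=n$; inserting the ansatz $\beta_n(0)=\frac{\sqrt n}{2\sqrt 3}\big(1+\sum_{k\ge 1}r_k n^{-2k}\big)$, which is consistent with~(\ref{betanzhangk}) at $t=0$ and is reproduced order by order by the equation, and matching powers of $n$, one finds $r_1=\frac1{24}$, $r_2=-\frac{7}{576}$, and as many further coefficients as required. Hence $\ln\beta_n(0)=\frac12\ln n-\ln(2\sqrt 3)+\frac1{24}n^{-2}-\frac{5}{384}n^{-4}+\cdots$, an asymptotic series whose corrections to $\frac12\ln n-\ln(2\sqrt 3)$ involve only even negative powers of $n$.

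Next set $R(n):=\ln D_n(0)-\big[\frac{n^{2}\ln n}{4}-\frac{(3+2\ln 12)n^{2}}{8}+n\ln(2\pi)-\frac{\ln n}{12}+\zeta'(-1)-\frac{\ln 2}{12}\big]$, so that $R(n)=o(1)$ by~(\ref{dn0}). Expanding the central second difference of the bracketed part via $f(n+1)-2f(n)+f(n-1)=f''(n)+\frac1{12}f^{(4)}(n)+\frac1{360}f^{(6)}(n)+\cdots$ and subtracting the result from $\ln\beta_n(0)$, the $\ln n$-, constant- and $n^{-2}$-terms cancel (a consistency check on~(\ref{dn0})), leaving
\[
R(n+1)-2R(n)+R(n-1)=-\tfrac{89}{1920}\,n^{-4}+c_{6}\,n^{-6}+\cdots ,
\]
a series in even negative powers of $n$ starting at $n^{-4}$. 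An $o(1)$ solution of such a difference equation must have the form $R(n)=g_1 n^{-2}+g_2 n^{-4}+\cdots$ (a $\ln n$ term, or an odd power of $1/n$, would force a non-zero $n^{-2}$ or $n^{-3}$ contribution on the left), so comparing $n^{-4}$-coefficients gives $6g_1=-\frac{89}{1920}$, i.e. $g_1=-\frac{89}{11520}$, and then the $n^{-6}$-coefficients give $10g_1+20g_2=c_6$, i.e. $g_2=\frac{6619}{2322432}$. This is precisely~(\ref{dn0a}).

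The argument is elementary, so the real work is bookkeeping: the discrete Painlev\'{e} recursion for $\beta_n(0)$ must be carried through the $n^{-11/2}$ term in order to produce $c_6$, and enough terms of the central-difference expansion must be kept to reach order $n^{-6}$, each order of which constrains several coefficients simultaneously. The one point beyond arithmetic is the justification that $R(n)$ carries no $\ln n$ and no odd-power terms; as indicated, this follows from the shape of the right-hand side of the difference equation for $R$ together with the uniqueness of asymptotic expansions.
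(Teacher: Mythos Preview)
Your argument is correct and shares its core ingredient with the paper's: both use the identity $\ln\beta_n(0)=\ln D_{n+1}(0)-2\ln D_n(0)+\ln D_{n-1}(0)$ together with the asymptotic expansion of $\beta_n(0)$ obtained from the discrete Painlev\'e~I relation~(\ref{s1}). The difference lies in how the \emph{shape} of the expansion is established and in the order of the steps. The paper first invokes Dyson's Coulomb fluid approximation (following \cite{ChenIsmail,MC1,MinJMP}) to produce an ansatz $\ln D_n(0)=\frac{n^2\ln n}{4}-\frac{(3+2\ln 12)n^2}{8}+C_1 n-\frac{\ln n}{12}+C_0+\sum_{k\ge 1}d_k n^{-2k}$ with two undetermined constants, then uses the $\beta_n$-identity to fix the $d_k$, and finally matches against~(\ref{dn0}) to read off $C_1$ and $C_0$. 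You reverse this: you take~(\ref{dn0}) as giving all terms through the constant, define the remainder $R(n)$, and deduce both the structure (only even negative powers) and the values of the tail coefficients directly from the second-order difference equation that $R(n)$ satisfies. Your route is more elementary in that it bypasses the Coulomb fluid machinery entirely; the paper's route has the advantage that the free-energy computation supplies the functional form of the expansion a priori, whereas you must argue for it from the parity/decay of the right-hand side and the $o(1)$ boundary condition. That last point is the only place your write-up is slightly informal---the claim that an $o(1)$ solution of $R(n{+}1)-2R(n)+R(n{-}1)\sim\sum_{k\ge 2}c_{2k}n^{-2k}$ necessarily admits the corresponding asymptotic series deserves a line of justification (e.g.\ subtract a formal series solution and sum twice)---but this is routine and does not affect the validity of the proof.
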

\begin{proof}
Notice that the potential $\mathrm{v}(x)=x^4$ is convex on $\mathbb{R}$ in the $t=0$ case. This enables us to use Dyson's Coulomb fluid approach \cite{Dyson} to derive the large $n$ asymptotics of the Hankel determinant $D_n(0)$. It was shown by Chen and Ismail \cite{ChenIsmail} that $F_n:=-\ln D_n(0)$ is approximated by the free energy $F[\s]$ associated with an equilibrium density $\s(x)$ supported on a single interval for sufficiently large $n$. Following the similar procedure in \cite{MC1,MinJMP} to compute $F[\s]$ and using the fact
$$
\bt_n(0)=\frac{D_{n+1}(0)D_{n-1}(0)}{D_{n}^2(0)}
$$
to determine the precise coefficients in the expansion form of $\ln D_n(0)$, we find
\be\label{equ1}
\ln D_n(0)=\frac{n^2\ln n}{4} -\frac{(3+2\ln 12)n^2}{8}+C_1n-\frac{\ln n}{12} +C_0-\frac{89}{11520 n^2}+\frac{6619}{2322432 n^4}+O(n^{-6}),
\ee
where $C_1$ and $C_0$ are two undetermined constants.

On the other hand, from (\ref{dn0}) we have
\be\label{equ2}
\ln D_n(0)=\frac{n^2\ln n}{4} -\frac{(3+2\ln 12)n^2}{8}+n \ln (2 \pi)-\frac{\ln n}{12} +\zeta^{\prime}(-1)-\frac{\ln 2}{12}+o(1).
\ee
The combination of (\ref{equ1}) and (\ref{equ2}) gives the desired result in (\ref{dn0a}).
\end{proof}
\begin{theorem}\label{thm3}
As $n \rightarrow \infty$, the Hankel determinant $D_n(t)$ has the following asymptotic expansion:
\begin{align}\label{Dntzhang}
\ln D_n(t)  =&\:\frac{n^2\ln n}{4}-\left(\frac{3}{8}+\frac{\ln 12}{4}\right)n^2 +\frac{2  n^{3 / 2}t}{3 \sqrt{3}}+\left[\frac{t^2 }{12}+\ln (2 \pi)\right]n+\frac{  n^{1/2}t^3}{36 \sqrt{3}}-\frac{\ln n}{12}+\frac{ t^4}{432}+\zeta'(-1)\no\\[8pt]
&-\frac{\ln 2}{12} +\frac{t( t^4-120)}{2880 \sqrt{3}\:n^{1/2}}-\frac{t^2}{288 n}-\frac{t^3( t^4-168)}{290304 \sqrt{3}\:n^{3/2}}+\frac{5 t^4-267}{34560 n^2}+O\big(n^{-5/2}\big).
\end{align}
\end{theorem}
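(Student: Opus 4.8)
The plan is to turn (\ref{Dntzhang}) into a one-dimensional integration in the parameter $t$ and then reduce it to expansions already in hand. Differentiating $D_n(t)=\prod_{j=0}^{n-1}h_j(t)$ and using (\ref{eq1}) gives
\[
\frac{d}{dt}\ln D_n(t)=\sum_{j=0}^{n-1}\bigl(\bt_j(t)+\bt_{j+1}(t)\bigr)=2\sum_{j=0}^{n-1}\bt_j(t)+\bt_n(t),
\]
the last equality using $\bt_0(t)=0$; invoking the telescoping identity (\ref{sum}) then yields the clean formula $\frac{d}{dt}\ln D_n(t)=\bt_n(t)-2\,\mathrm{p}(n,t)$. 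Integrating from $0$ to $t$,
\[
\ln D_n(t)=\ln D_n(0)+\int_0^t\bigl(\bt_n(s)-2\,\mathrm{p}(n,s)\bigr)\,ds,
\]
which expresses $\ln D_n(t)$ entirely through quantities whose large-$n$ behaviour is known: $\ln D_n(0)$ from (\ref{dn0a}), the coefficient $\bt_n$ from Theorem \ref{thm1} (i.e.\ (\ref{betanzhangk})), and $\mathrm{p}(n,t)$ from the expansion obtained earlier in this section.

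The second step is to substitute those expansions into the integrand and integrate term by term in $s$. A short computation shows that the half-integer powers of $n$ conspire so that, in $\bt_n(s)-2\,\mathrm{p}(n,s)$, the coefficient of $n^{3/2}$ is the constant $\tfrac{2}{3\sqrt3}$, the coefficient of $n$ is $\tfrac{s}{6}$, that of $n^{1/2}$ is $\tfrac{s^2}{12\sqrt3}$, that of $n^{0}$ is $\tfrac{s^3}{108}$, and so on down through $n^{-2}$. Integrating each monomial over $[0,t]$ and adding (\ref{dn0a}) reproduces (\ref{Dntzhang}); for instance the $n^{-2}$ coefficient of the integrand simplifies to $\tfrac{s^3}{1728}$, whose integral $\tfrac{t^4}{6912}$ combines with the $-\tfrac{89}{11520}$ coming from $\ln D_n(0)$ to give the stated $\tfrac{5t^4-267}{34560}$.

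The main obstacle is justifying the interchange of the $n\to\infty$ asymptotics with the $t$-integration, since Theorem \ref{thm1} and the $\mathrm{p}(n,t)$-expansion are stated for fixed $t$. What is needed is that the remainders $O(n^{-7/2})$ and $O(n^{-5/2})$ in those expansions hold uniformly for $s$ in the compact interval with endpoints $0$ and $t$; the remainder then integrates to a quantity still of order $O(n^{-5/2})$, matching the error claimed in (\ref{Dntzhang}), and the proposition's expansion (\ref{dn0a}) of $\ln D_n(0)$ is already finer than this. I would establish the required uniformity by re-examining the derivation of the expansions: they arise from feeding the ansatz $\bt_n=\sum_k c_k(t)\,n^{(1-k)/2}$ into the discrete Painlev\'{e}~I equation (\ref{s1}) and then into (\ref{pn1}); since the $c_k(t)$ are polynomials in $t$ and the defining recursion is algebraic, an induction on $k$ yields remainder bounds that are locally uniform in $t$. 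With this in place the term-by-term integration is legitimate and the theorem follows.
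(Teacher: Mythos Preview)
Your proof is correct and follows essentially the same strategy as the paper: differentiate $\ln D_n(t)$ via (\ref{dnyuhj}) and (\ref{eq1}), express the sum $\sum_{j=0}^{n-1}(\beta_j+\beta_{j+1})$ in closed form, integrate in $t$, substitute the $\beta_n$-expansion, and add the $\ln D_n(0)$ asymptotics from (\ref{dn0a}). The only cosmetic difference is that the paper writes the integrand directly as $4\beta_n(\beta_{n-1}+\beta_n)(\beta_n+\beta_{n+1})-2t\beta_n^2$ using (\ref{s1})--(\ref{s2}), whereas you write it as $\beta_n-2\,\mathrm{p}(n,t)$ via (\ref{sum}); by (\ref{pn1}) these two expressions are identical, so your route simply recycles the already-computed expansion of $\mathrm{p}(n,t)$. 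Your added paragraph on the locally uniform validity of the remainder terms supplies a point of rigor that the paper leaves implicit.
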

\begin{proof}
From (\ref{dnyuhj}) and (\ref{eq1}), we have
$$
\frac{d}{d t} \ln D_n(t)=\frac{d}{d t} \ln \prod_{j=0}^{n-1} h_j(t)=\sum_{j=0}^{n-1} \frac{d}{d t} \ln h_j(t)= \sum_{j=0}^{n-1}\left(\beta_j+\beta_{j+1}\right).
$$
By making using of (\ref{s2}) and (\ref{s1}), we find
$$
\frac{d}{d t} \ln D_n(t)=4 \beta_n\left(\beta_{n-1}+\beta_n\right)\left(\beta_n+\beta_{n+1}\right)-2 t \beta_n^2.
$$
It follows that
$$
\ln \frac{D_n(t)}{D_n(0)}=\int_0^t\left[4 \beta_n(s)\big(\beta_{n-1}(s)+\beta_n(s)\big)\big(\beta_n(s)+\beta_{n+1}(s)\big)-2 s \beta_n^2(s)\right] d s.
$$
Substituting (\ref{betanzhangk}) into the above and taking a large $n$ limit, we obtain
\begin{align}
\ln \frac{D_n(t)}{D_n(0)}=&\:\frac{2 n^{3 / 2}t}{3 \sqrt{3}}+\frac{ nt^2}{12}+\frac{ n^{1/2}t^3 }{36 \sqrt{3}}+\frac{ t^4}{432}+\frac{t( t^4-120)}{2880 \sqrt{3}\:n^{1/2}}-\frac{t^2}{288 n}-\frac{t^3( t^4-168)}{290304 \sqrt{3}\:n^{3/2}}\no\\[8pt]
&+\frac{t^4}{6912n^2}+O\big(n^{-5/2}\big).\no
\end{align}
By making use of (\ref{dn0a}), we establish the theorem.
\end{proof}
\begin{corollary}
The quantity $h_n(t)$ has the large $n$ asymptotic expansion
\begin{align}\label{hntzhang}
\ln h_n(t)=&\:\frac{n \ln n}{2} -\frac{1}{2}(1+\ln 12)n +\frac{n^{1/2}t }{\sqrt{3}}+\frac{\ln n}{4} +\frac{t^2}{12} -\frac{\ln 12}{4} +\ln (2\pi)+\frac{ t (t^2+18)}{72 \sqrt{3}\:n^{1/2}}\no\\[8pt]
&-\frac{ t (t^4+20 t^2+120)}{5760 \sqrt{3}\:n^{3/2}}+\frac{t^2+6}{288 n^2}+O\big(n^{-5/2}\big).
\end{align}
\end{corollary}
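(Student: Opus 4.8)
The plan is to exploit the factorization $D_{n+1}(t)=h_n(t)\,D_n(t)$, which is immediate from $D_n(t)=\prod_{j=0}^{n-1}h_j(t)$ in \eqref{dnyuhj}. Hence
$$
\ln h_n(t)=\ln D_{n+1}(t)-\ln D_n(t),
$$
and it suffices to substitute the asymptotic expansion \eqref{Dntzhang} of Theorem \ref{thm3} twice — once as it stands and once with $n$ replaced by $n+1$ — and subtract.

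The single computational step is to re-expand each term of $\ln D_{n+1}(t)$ about $n$. Using $\ln(n+1)=\ln n+\frac1n-\frac1{2n^2}+\frac1{3n^3}-\cdots$ and $(n+1)^{\alpha}=n^{\alpha}(1+\tfrac1n)^{\alpha}$ for the powers $\alpha$ occurring in \eqref{Dntzhang}, one collects all contributions up to order $n^{-5/2}$. The leading term $\frac14 n^2\ln n$ contributes, via $(n+1)^2\ln(n+1)-n^2\ln n$, the dominant part $\frac12 n\ln n$, the $\frac14\ln n$ term, an $\frac14 n$ piece and a constant $\frac38$, together with corrections down to $O(n^{-2})$; the two $n^2$ terms and the $n$ term of \eqref{Dntzhang} feed the coefficient $-\frac12(1+\ln 12)$ of $n$ and the constant of $\ln h_n(t)$; the $\frac{2t}{3\sqrt3}n^{3/2}$ and $\frac{t^3}{36\sqrt3}n^{1/2}$ terms feed the $\frac{t}{\sqrt3}n^{1/2}$ term and contribute to the $n^{-1/2}$ and $n^{-3/2}$ coefficients; and the $n^{-1/2}$ and $n^{-1}$ terms of \eqref{Dntzhang} contribute respectively to the $n^{-3/2}$ and $n^{-2}$ coefficients. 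Summing all contributions reproduces \eqref{hntzhang}; for instance the constant comes out as $\frac38-\frac38-\frac{\ln 12}{4}+\left(\frac{t^2}{12}+\ln(2\pi)\right)=\frac{t^2}{12}-\frac{\ln 12}{4}+\ln(2\pi)$, and the $n^{-2}$ coefficient as $-\frac1{48}+\frac1{24}+\frac{t^2}{288}=\frac{t^2+6}{288}$.

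The main obstacle is purely the organizational bookkeeping: every term of degree $\ge 1$ in $n$, as well as the $n^2\ln n$ term, contributes to several lower orders, so the binomial and logarithmic expansions must all be carried consistently to the same order $n^{-5/2}$. One should also note that the $O(n^{-5/2})$ remainder in \eqref{Dntzhang}, which appears in both $\ln D_{n+1}(t)$ and $\ln D_n(t)$, still leaves an $O(n^{-5/2})$ error after subtraction, since the difference of two quantities that are each $O(n^{-5/2})$ is again $O(n^{-5/2})$.

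As an independent check, one can differentiate \eqref{hntzhang} in $t$ and verify agreement with $\beta_n(t)+\beta_{n+1}(t)$ computed from \eqref{betanzhangk} and its shift $n\mapsto n+1$, in view of \eqref{eq1}; and the $t$-independent part of \eqref{hntzhang} should match $\ln D_{n+1}(0)-\ln D_n(0)$ obtained from \eqref{dn0a}.
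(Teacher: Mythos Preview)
Your proof is correct and follows exactly the same approach as the paper: use $\ln h_n(t)=\ln D_{n+1}(t)-\ln D_n(t)$ from \eqref{dnyuhj} and substitute the expansion \eqref{Dntzhang} of Theorem~\ref{thm3}. You simply spell out the bookkeeping (binomial and logarithmic expansions of the shifted terms, treatment of the $O(n^{-5/2})$ remainder, and a couple of coefficient checks) that the paper leaves implicit.
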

\begin{proof}
It is easy to see from (\ref{dnyuhj}) that
$$
\ln h_n(t)=\ln D_{n+1}(t)-\ln D_n(t).
$$
Substituting (\ref{Dntzhang}) into the above, we obtain (\ref{hntzhang}) by taking a large $n$ limit.
\end{proof}
Finally, we mention that the asymptotic behavior of the polynomials orthogonal with respect to the weight (\ref{wei2}) has been studied in \cite{BI,WWX,WZ1} and applied to random matrix theory in \cite{BI,BI2}; see also \cite{BI3,WZ2} for the more general case.

\section{The $m=3$ Case}
In this section, we study the $m=3$ case, and the weight function now is
$$
w(x;t)=\mathrm{e}^{-x^{6}+t x^2},\qquad x\in \mathbb{R}
$$
with $t\in \mathbb{R}$. We have
$$
\mathrm{v}(x)=-\ln w(x)=x^6-tx^2.
$$
It follows that
\be\label{vp3}
\frac{\mathrm{v}'(x)-\mathrm{v}'(y)}{x-y}=6\left(x^4+x^3 y+x^2 y^2+x y^3+y^4\right)-2 t.
\ee
Plugging (\ref{vp3}) into the definitions of $A_n(x)$ and $B_n(x)$ in (\ref{an}) and (\ref{bn}), we obtain
\begin{equation}\label{an3}
A_n(x)=6 x^4+6x^2\left(\beta_n+\beta_{n+1}\right) -2 t+R_n(t),
\end{equation}
\begin{equation}\label{bn3}
B_n(x)=6x^3 \beta_n +x\:r_n(t),
\end{equation}
where $R_{n}(t)$ and  $r_{n}(t)$ are the auxiliary quantities given by
$$
R_n(t):=\frac{6}{h_n} \int_{-\infty}^{\infty} y^4 P_{n}^{2}(y) w(y) d y,
$$
$$
r_n(t):=\frac{6}{h_{n-1}} \int_{-\infty}^{\infty} y^3 P_n(y) P_{n-1}(y) w(y) d y.
$$

Substituting (\ref{an3}) and (\ref{bn3}) into ($S_{1}$), we find
\begin{equation}\label{m4}
R_n(t)=r_n(t)+r_{n+1}(t).
\end{equation}
Similarly, substituting (\ref{an3}) and (\ref{bn3}) into ($S_{2}'$), we obtain the following four equalities:
\begin{equation}\label{s5}
6 \beta_n^2+ r_n(t)=6 \beta_n\left(\beta_{n-1}+2 \beta_n+\beta_{n+1}\right),
\end{equation}
\begin{equation}\label{s6}
2 \beta_n r_n(t)-2 t \beta_n+ n=\beta_n\left[R_{n-1}(t)+R_n(t)-4 t+6\left(\beta_{n-1}+\beta_n\right)\left(\beta_n+\beta_{n+1}\right)\right],
\end{equation}
\begin{align}\label{s7}
&r_n^2(t)-2 t\: r_n(t)+6 \sum_{j=0}^{n-1}\left(\beta_j+\beta_{j+1}\right)\no\\
&= 6\beta_n\left[\left(\beta_n+\beta_{n+1}\right) \left(R_{n-1}(t)-2t\right)+\left(\beta_{n-1}+\beta_n\right) \left(R_n(t)-2t\right)\right],
\end{align}
$$
\sum_{j=0}^{n-1} R_j(t)-2 n t=\beta_n\left(R_n(t)-2t\right)\left(R_{n-1}(t)-2t\right).
$$
From (\ref{s5}) and (\ref{m4}), we can express $r_n(t)$ and $R_n(t)$ in terms of the recurrence coefficient $\bt_n$:
\begin{equation}\label{rn3}
r_n(t)=6\beta_n\left(\beta_{n-1}+\beta_n+\beta_{n+1}\right),
\end{equation}
\begin{equation}\label{Rn3}
R_n(t)=6\beta_n(\beta_{n-1}+\beta_n+\beta_{n+1})+6\beta_{n+1}(\beta_n+\beta_{n+1}+\beta_{n+2}).
\end{equation}
Substituting (\ref{rn3}) and (\ref{Rn3}) into (\ref{s6}), we find the fourth-order nonlinear difference equation satisfied by $\bt_n$,
\begin{align}\label{equa1}
&\:6 \beta_n\left(\beta_{n-2} \beta_{n-1}+\beta_{n-1}^2+2\beta_{n-1}\beta_{n}+\beta_{n-1}\beta_{n+1}+\beta_n^2+2\bt_n\bt_{n+1}+\bt_{n+1}^2+\beta_{n+1} \beta_{n+2}\right)\no\\
&-2t\beta_n=n.
\end{align}
This equation is within the discrete Painlev\'{e} I hierarchy \cite{CJ} and was derived by Magnus \cite{Magnus} and Wang et al. \cite{WZC}. See also Clarkson and Jordaan \cite{Clarkson2} and the $t=0$ case by Freud \cite{Freud} and Van Assche \cite{VanAssche2}.

Similarly as in the previous section, we consider the large $n$ asymptotics of the recurrence coefficient $\bt_n(t)$, the nontrivial leading coefficient $\mathrm{p}(n,t)$, the Hankel determinant $D_n(t)$ and the quantity $h_n(t)$ for fixed $t\in \mathbb{R}$ in the following discussions.
\begin{theorem}\label{thm11}
As $n\rightarrow\infty$, the recurrence coefficient $\beta_{ n }(t)$ has the asymptotic expansion for fixed $t\in \mathbb{R}$,
\begin{align}\label{betanzhangk2}
\beta_n=&\:\frac{n^{1/3}}{\sqrt[3]{60}}+\frac{t}{3\sqrt[3]{450}\: n^{1/3}}-\frac{4 t^3-135}{2430\sqrt[3]{60}\: n^{5/3}}+\frac{t \left(2 t^3-45\right)}{3645\sqrt[3]{450}\:   n^{7/3}}-\frac{2 t^2}{1215 n^3}-\frac{16 t^6-3780 t^3+40095}{1476225 \sqrt[3]{60}\: n^{11/3}}\no\\[8pt]
&+\frac{t (16 t^6-180 t^3-112995)}{3542940 \sqrt[3]{450}\: n^{13/3}}+O(n^{-5}).
\end{align}
\end{theorem}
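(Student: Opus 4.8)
The plan is to determine the large-$n$ asymptotic expansion of $\beta_n(t)$ by treating the fourth-order nonlinear difference equation (\ref{equa1}) as the defining relation and solving it order by order with an ansatz of the form
$$
\beta_n(t)=\sum_{k\geq 0}\frac{c_k(t)}{n^{(2k-1)/3}}
= c_{-1}n^{1/3}+c_0+c_1 n^{-1/3}+c_2 n^{-2/3}+\cdots,
$$
in the spirit of the $m=2$ argument (Clarke--Shizgal, Clarkson--Jordaan) alluded to in Theorem~\ref{thm1}. The leading power $n^{1/3}$ and the form of the exponents are dictated by the balance in (\ref{equa1}): all the quadratic-in-$\beta$ terms on the left produce a quantity of order $\beta_n^2$ (times a combinatorial factor) which must match $n$, forcing $\beta_n\sim n^{1/3}$.

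First I would substitute the ansatz into (\ref{equa1}), expand each shifted coefficient $\beta_{n\pm j}$ in powers of $1/n$ using $(n\pm j)^{\alpha}=n^{\alpha}(1\pm j/n)^{\alpha}$ and the binomial series, and collect the bracketed sum
$\beta_{n-2}\beta_{n-1}+\beta_{n-1}^2+2\beta_{n-1}\beta_n+\beta_{n-1}\beta_{n+1}+\beta_n^2+2\beta_n\beta_{n+1}+\beta_{n+1}^2+\beta_{n+1}\beta_{n+2}$. At the top order this is $10\,c_{-1}^2 n^{2/3}$, so $6\cdot c_{-1}\cdot 10 c_{-1}^2 = 1$ at order $n$, giving $c_{-1}=60^{-1/3}$, matching (\ref{betanzhangk2}). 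Then I would proceed recursively: at each successive order in $1/n$ a single new unknown $c_k(t)$ appears linearly (the coefficient being a nonzero multiple of $c_{-1}^2$ coming from the $6\beta_n\cdot(\text{sum})$ structure), while all lower-order $c_j$ and the explicit $-2t\beta_n$ and $n$ terms are already known; solving this linear equation yields $c_k(t)$. Iterating through enough orders produces all terms displayed in (\ref{betanzhangk2}) up to the $O(n^{-5})$ remainder. One should note that the odd powers $c_1 n^{-1/3},\,c_2 n^{-2/3}$ may vanish or combine; inspection of (\ref{betanzhangk2}) shows the expansion actually proceeds in the variable $n^{-2/3}$ after the first correction, with coefficients polynomial in $t$, so the bookkeeping simplifies once one recognizes this pattern (plausibly provable by an induction showing the series is even in $n^{-1/3}$ up to the overall shift, using the symmetry of (\ref{equa1}) under $j\mapsto -j$ in the index shifts).

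The main obstacle is not conceptual but organizational: the coefficient extraction requires expanding four shifted sequences each as a full power series and multiplying them pairwise, so the number of terms one must track to reach order $n^{-11/3}$ (needed for the last displayed coefficient) is substantial, and a sign or binomial-coefficient slip propagates into every subsequent $c_k$. To control this I would (i) verify the first few coefficients against the known structure and, if available, against Magnus~\cite{Magnus} or Wang et al.~\cite{WZC}, and (ii) use the alternative lower-order relations --- the recurrences (\ref{s5})--(\ref{s7}) and the Volterra lattice (\ref{eq100}) together with discrete Painlev\'e~I-type relation (\ref{equa1}) --- as independent consistency checks, e.g. substituting the candidate expansion into (\ref{eq100}) $\beta_n'=\beta_n(\beta_{n+1}-\beta_{n-1})$ to confirm the $t$-derivatives of the $c_k(t)$ are mutually consistent. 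Finally I would remark that the rigorous justification that a formal power-series solution of (\ref{equa1}) is genuinely asymptotic to the true $\beta_n(t)$ follows the same lines as in the $m=2$ case cited in the proof of Theorem~\ref{thm1}, so it suffices here to exhibit the expansion and its coefficients.
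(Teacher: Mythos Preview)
Your proposal is correct and is essentially the same approach as the paper's: the paper cites Clarkson and Jordaan \cite[Theorem 4.9]{Clarkson2} and states that more terms are produced ``by using their method here,'' which is precisely the substitution of a power-series ansatz in $n^{-1/3}$ into the fourth-order difference equation (\ref{equa1}) and recursive determination of the coefficients. Your additional consistency checks via (\ref{eq100}) and (\ref{s5})--(\ref{s7}) are not in the paper but are sensible safeguards against arithmetic slips.
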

\begin{proof}
See Clarkson and Jordaan \cite[Theorem 4.9]{Clarkson2}. We produce more terms by using their method here.
\end{proof}
\begin{theorem}
The nontrivial leading coefficient $\mathrm{p}(n,t)$ has the large $n$ asymptotic expansion
\begin{align}
\mathrm{p}(n,t)=& -\frac{3\: n^{4 / 3}}{4\sqrt[3]{60}}-\frac{n^{2 / 3}t }{2\sqrt[3]{450}}+\frac{n^{1 / 3}}{2 \sqrt[3]{60}}-\frac{t^2}{90}+\frac{t}{6 \sqrt[3]{450}\:n^{1 / 3}}- \frac{2 t^3-45}{810 \sqrt[3]{60}\:n^{2 / 3}}\no\\[8pt]
&+\frac{t^4}{2430 \sqrt[3]{450}\:n^{4 / 3}}-\frac{4 t^3-135}{4860\sqrt[3]{60}\: n^{5/ 3}}+O(n^{-2}).\no
\end{align}
\end{theorem}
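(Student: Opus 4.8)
The plan is to mirror the $m=2$ argument used to establish (\ref{pn1}) and the preceding theorem: first turn the statement into a purely algebraic identity expressing $\mathrm{p}(n,t)$ through a finite window of recurrence coefficients, and only then feed in the asymptotic expansion (\ref{betanzhangk2}) of $\bt_n$ from Theorem \ref{thm11}. Note that the identity (\ref{sum}) alone, $\mathrm{p}(n,t)=-\sum_{j=0}^{n-1}\bt_j(t)$, combined with Euler--Maclaurin summation would recover the $n$-dependent terms but not the constant $-t^2/90$, since (\ref{betanzhangk2}) is only a large-$n$ statement and the contribution of the small-$j$ terms is not seen by the asymptotic tail; so an exact relation is genuinely needed. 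That relation is (\ref{s7}), which is the $m=3$ counterpart of (\ref{s2}): it is the only one of the four consequences of $(S_2')$ that carries the sum $\sum_{j=0}^{n-1}(\bt_j+\bt_{j+1})$. Using (\ref{sum}) and $\bt_0=0$,
$$\sum_{j=0}^{n-1}(\bt_j+\bt_{j+1})=2\sum_{j=0}^{n-1}\bt_j+\bt_n=-2\,\mathrm{p}(n,t)+\bt_n,$$
so (\ref{s7}) solves for $\mathrm{p}(n,t)$ as
$$\mathrm{p}(n,t)=\frac{1}{12}\Big(r_n^2(t)-2t\,r_n(t)+6\bt_n\Big)-\frac{1}{2}\bt_n\Big[(\bt_n+\bt_{n+1})\big(R_{n-1}(t)-2t\big)+(\bt_{n-1}+\bt_n)\big(R_n(t)-2t\big)\Big].$$

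Next I would eliminate the auxiliary quantities by inserting the closed forms (\ref{rn3}) and (\ref{Rn3}) for $r_n(t)$ and $R_n(t)$ (and $R_{n-1}(t)$), which turns the right-hand side into a degree-four polynomial in the five consecutive coefficients $\bt_{n-2},\bt_{n-1},\bt_n,\bt_{n+1},\bt_{n+2}$. Exactly as (\ref{s1}) was used to collapse the $m=2$ expression into the compact form (\ref{pn1}), I would then invoke the fourth-order difference equation (\ref{equa1}) (equivalently (\ref{s6})) to simplify; I expect the quartic part to cancel and a much shorter expression to remain, but even if the collapse is less dramatic than for $m=2$, what matters is that one ends with a closed algebraic formula for $\mathrm{p}(n,t)$ in terms of finitely many shifted $\bt$'s.

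The final step is to substitute (\ref{betanzhangk2}), expanding each $\bt_{n+j}$ for the fixed shifts $j\in\{-2,-1,0,1,2\}$ as a series in powers of $n^{1/3}$, collecting by powers of $n$ and reading off the coefficients. Since $\mathrm{p}(n,t)$ is quartic in the $\bt$'s and $\bt_n\sim n^{1/3}/\sqrt[3]{60}$, its leading term is of order $n^{4/3}$, with heavy cancellation among the subleading contributions; to reach the stated error $O(n^{-2})$ one must keep all seven displayed terms of (\ref{betanzhangk2}) (down to order $n^{-13/3}$) and track those cancellations with care. This bookkeeping — best carried out with computer algebra — is the main obstacle; everything else is a routine transcription of the $m=2$ proof.
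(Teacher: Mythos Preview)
Your proposal is correct and follows essentially the same route as the paper: combine (\ref{s7}) with (\ref{sum}) to solve for $\mathrm{p}(n,t)$, eliminate $r_n$ and $R_n$ via (\ref{rn3}) and (\ref{Rn3}), and then substitute the expansion (\ref{betanzhangk2}). One small correction: the extra simplification step through (\ref{equa1}) that you anticipate is neither used nor needed --- once (\ref{rn3}) and (\ref{Rn3}) are inserted, the $r_n^2/12$ term and the $(\bt_{n-1}+2\bt_n+\bt_{n+1})r_n$ piece of the bracket combine algebraically to $-3\bt_n^3(\bt_{n-1}+\bt_n+\bt_{n+1})$, and the resulting expression is already the paper's compact quartic formula (\ref{equa2}); the quartic terms do not cancel.
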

\begin{proof}
From (\ref{s7}) and (\ref{sum}), we can express $\mathrm{p}(n,t)$ in terms of the recurrence coefficient $\bt_n$:
\begin{align}\label{equa2}
\mathrm{p}(n,t)=&\:\frac{1}{2}\bt_n\big[1+2t\bt_n-6\bt_n^2(\beta_{n-1}+\beta_n+\beta_{n+1})-6(\beta_n+\beta_{n+1})\beta_{n-1}(\beta_{n-2}+\beta_{n-1}+\beta_{n})\no\\[8pt]
&-6(\beta_{n-1}+\beta_{n})\beta_{n+1}(\beta_{n}+\beta_{n+1}+\beta_{n+2})\big],
\end{align}
where we have used (\ref{rn3}) and (\ref{Rn3}) to eliminate $r_n(t)$ and $R_n(t)$. Substituting (\ref{betanzhangk2}) into the above and letting $n\rightarrow\infty$, we obtain the result in the theorem.
\end{proof}
\begin{remark}
Similarly as in the previous section (see Remark \ref{re}), one can derive a fourth-order difference equation satisfied by $\mathrm{p}(n,t)$ from (\ref{al}), (\ref{equa1}) and (\ref{equa2}). We leave it to the interested reader.
\end{remark}
\begin{proposition}
The Hankel determinant $D_n(0)$ has the large $n$ asymptotic expansion
\begin{align}\label{dn0b}
\ln D_n(0)=&\:\frac{n^2\ln n}{6} -\left(\frac{1}{4}+\frac{\ln 60}{6}\right) n^2+n \ln (2 \pi)-\frac{\ln n}{12} +\zeta^{\prime}(-1)-\frac{\ln 3}{12}\no\\[8pt]
&-\frac{7}{648 n^2}+\frac{8521}{1837080 n^4}+O(n^{-6}).
\end{align}
\end{proposition}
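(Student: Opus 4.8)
The plan is to mirror the proof of the corresponding $m=2$ proposition: combine Dyson's Coulomb fluid approach \cite{Dyson} with the Claeys--Krasovsky--Minakov asymptotics (\ref{dn0}) specialised to $m=3$. At $t=0$ the potential $\mathrm{v}(x)=x^6$ is convex on $\mathbb{R}$, so by \cite{ChenIsmail} the quantity $F_n:=-\ln D_n(0)$ is well approximated for large $n$ by the free energy $F[\s]$ of an equilibrium density $\s$ supported on a single symmetric interval $[-b,b]$. First I would determine $\s$: in this one--cut situation it has the form $\s(x)=\sqrt{b^2-x^2}\,q(x)$ with $q$ an even polynomial of degree $4$, the coefficients of $q$ and the endpoint $b$ being fixed by the normalisation $\int_{-b}^{b}\s=n$ together with the variational (equilibrium) condition. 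Then I would evaluate $F[\s]=\int_{-b}^{b}\mathrm{v}(x)\s(x)\,dx-\iint\ln|x-y|\,\s(x)\s(y)\,dx\,dy$ following the procedure of \cite{MC1,MinJMP}. This should produce the expansion in the form
\[
\ln D_n(0)=\frac{n^2\ln n}{6}-\Bigl(\tfrac14+\tfrac{\ln 60}{6}\Bigr)n^2+C_1 n-\frac{\ln n}{12}+C_0-\frac{7}{648\,n^2}+\frac{8521}{1837080\,n^4}+O(n^{-6}),
\]
where $C_1$ and $C_0$ are two constants not fixed by the leading fluid computation. To pin down the remaining (non-constant) coefficients I would use $\bt_n(0)=D_{n+1}(0)D_{n-1}(0)/D_n^2(0)$, which follows from (\ref{be}) and (\ref{dnyuhj}); that is, $\ln\bt_n(0)$ equals the second difference of $\ln D_n(0)$, and I would match it against the $t=0$ specialisation of the expansion of $\bt_n(t)$ in Theorem~\ref{thm11}. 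Since the second difference annihilates the $n^1$ and $n^0$ terms, this confirms the $n^2\ln n$, $n^2$, $\ln n$, $n^{-2}$ and $n^{-4}$ coefficients but leaves exactly $C_1$ and $C_0$ undetermined.

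To remove this ambiguity I would invoke (\ref{dn0}) with $m=3$. There $A=\bigl(\Gamma(3)\Gamma(\tfrac12)/\Gamma(\tfrac72)\bigr)^{1/6}=(16/15)^{1/6}$, whence $\ln\tfrac{A}{2}-\tfrac{3}{4m}=\tfrac16\ln\tfrac{16}{15}-\ln 2-\tfrac14=-\tfrac14-\tfrac{\ln 60}{6}$, consistent with the $n^2$ coefficient above, and $c(3)=\zeta'(-1)-\tfrac{\ln 3}{12}$. Hence (\ref{dn0}) reads
\[
\ln D_n(0)=\frac{n^2\ln n}{6}-\Bigl(\tfrac14+\tfrac{\ln 60}{6}\Bigr)n^2+n\ln(2\pi)-\frac{\ln n}{12}+\zeta'(-1)-\frac{\ln 3}{12}+o(1).
\]
Comparing this with the fluid expansion forces $C_1=\ln(2\pi)$ and $C_0=\zeta'(-1)-\tfrac{\ln 3}{12}$, which yields (\ref{dn0b}).

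The main obstacle will be the exact evaluation of the equilibrium density and of $F[\s]$ to high enough order in $1/n$ to extract the subleading coefficients $-\tfrac{7}{648}$ and $\tfrac{8521}{1837080}$: this is computationally heavier than in the $m=2$ case, because $\s$ now carries a quartic polynomial factor rather than a quadratic one, and the $1/n$--corrections to the one--cut approximation must be tracked carefully. An alternative and arguably cleaner route to these two coefficients is to bootstrap them directly from the $t=0$ string equation (\ref{equa1}) together with the known expansion of $\bt_n(0)$ from Theorem~\ref{thm11}; once the structural form of the expansion is granted, everything else---identifying $A$ and $c(m)$ and matching the two constants---is routine.
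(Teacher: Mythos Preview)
Your proposal is correct and follows essentially the same strategy as the paper: apply Dyson's Coulomb fluid method (justified by convexity of $\mathrm{v}(x)=x^6$) together with the $\beta_n(0)=D_{n+1}(0)D_{n-1}(0)/D_n^2(0)$ matching to obtain the expansion up to two undetermined constants, and then fix those constants via the Claeys--Krasovsky--Minakov formula~(\ref{dn0}) with $m=3$. The paper's proof is terser---it simply cites the procedure of \cite{MC1,MinJMP} and then combines with~(\ref{dn0})---but your added detail (the explicit form of $\sigma$, the evaluation of $A=(16/15)^{1/6}$ and $c(3)$, and the remark on bootstrapping the $n^{-2}$ and $n^{-4}$ coefficients from the string equation) is all in line with what the paper intends.
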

\begin{proof}
In this $t=0$ case, the potential $\mathrm{v}(x)=x^6$ is convex on $\mathbb{R}$. By Dyson's Coulomb fluid approach and following the similar procedure in \cite{MC1,MinJMP}, we find
\be\label{equ11}
\ln D_n(0)=\frac{n^2\ln n}{6} -\left(\frac{1}{4}+\frac{\ln 60}{6}\right) n^2+\widetilde{C}_1n -\frac{\ln n}{12} +\widetilde{C}_0-\frac{7}{648 n^2}+\frac{8521}{1837080 n^4}+O(n^{-6}).
\ee
where $\widetilde{C}_1$ and $\widetilde{C}_0$ are two undetermined constants.

From (\ref{dn0}) we have
\be\label{equ22}
\ln D_n(0)=\frac{n^2\ln n}{6} -\left(\frac{1}{4}+\frac{\ln 60}{6}\right) n^2+n \ln (2 \pi)-\frac{\ln n}{12} +\zeta^{\prime}(-1)-\frac{\ln 3}{12}+o(1).
\ee
Combining (\ref{equ11}) and (\ref{equ22}) gives the desired result.
\end{proof}
\begin{theorem}\label{thm5}
The Hankel determinant $D_n(t)$ has the following asymptotic expansion as $n \rightarrow \infty$,
\begin{align}\label{Dnzhangk2}
\ln D_n(t)=&\:\frac{n^2\ln n}{6} -\left(\frac{1}{4}+\frac{\ln60}{6}\right)n^2+\frac{3\:  n^{4/3}t}{2 \sqrt[3]{60}}+n \ln (2 \pi)+\frac{ n^{2/3}t^2}{2 \sqrt[3]{450}}-\frac{\ln n}{12} +\frac{t^3}{135}+\zeta^{\prime}(-1)-\frac{\ln 3}{12}\no\\[8pt]
&+\frac{t(t^3-90)}{810 \sqrt[3]{60}\: n^{2/3}}
-\frac{t^5}{6075 \sqrt[3]{450}\: n^{4/3}}+\frac{16 t^3-315}{29160 n^2}+O\big(n^{-8/3}\big).
\end{align}
\end{theorem}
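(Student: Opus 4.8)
The plan is to follow the structure of the proof of Theorem~\ref{thm3} line by line, replacing the $m=2$ identities by their $m=3$ counterparts. First, differentiating (\ref{dnyuhj}) with respect to $t$ and using (\ref{eq1}) gives
\[
\frac{d}{dt}\ln D_n(t)=\sum_{j=0}^{n-1}\big(\beta_j(t)+\beta_{j+1}(t)\big).
\]
The right-hand side is exactly the telescoping sum that occurs in (\ref{s7}); solving that relation for it yields
\[
\frac{d}{dt}\ln D_n(t)=\beta_n\big[(\beta_n+\beta_{n+1})(R_{n-1}-2t)+(\beta_{n-1}+\beta_n)(R_n-2t)\big]-\frac{1}{6}\big(r_n^2-2t\,r_n\big).
\]
I would then eliminate the auxiliary quantities $r_n(t)$, $R_{n-1}(t)$ and $R_n(t)$ by means of (\ref{rn3}) and (\ref{Rn3}), turning the right-hand side into an explicit polynomial in $t$ and in the five consecutive recurrence coefficients $\beta_{n-2},\beta_{n-1},\beta_n,\beta_{n+1},\beta_{n+2}$; one may further tidy this polynomial using the fourth-order difference equation (\ref{equa1}), in the same way that the discrete Painlev\'{e}~I equation (\ref{s1}) was used to simplify the corresponding expression in the $m=2$ case.

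Next, integrating from $0$ to $t$ gives
\[
\ln\frac{D_n(t)}{D_n(0)}=\int_0^t\Big[\beta_n(s)\big((\beta_n+\beta_{n+1})(R_{n-1}-2s)+(\beta_{n-1}+\beta_n)(R_n-2s)\big)-\tfrac{1}{6}\big(r_n^2-2s\,r_n\big)\Big]ds,
\]
where all of $\beta_n,\beta_{n\pm1},r_n,R_n,R_{n-1}$ are understood at parameter value $s$. Into this I would substitute the asymptotic expansion (\ref{betanzhangk2}) for $\beta_n(s)$, together with the shifted expansions of $\beta_{n\pm1}(s)$ and $\beta_{n\pm2}(s)$ obtained from (\ref{betanzhangk2}) by the replacements $n\mapsto n\pm1,\,n\pm2$ and re-expansion in powers of $n^{-1/3}$. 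Expanding the integrand in powers of $n^{-1/3}$, uniformly for $s$ in the bounded interval with endpoints $0$ and $t$, and integrating term by term produces the large-$n$ expansion of $\ln\big(D_n(t)/D_n(0)\big)$ down to $O(n^{-8/3})$, with leading term of order $n^{4/3}t$. No $n$- and $t$-independent constant is generated, since the integrand vanishes identically at $s=0$ (equivalently $D_n(0)/D_n(0)=1$), which also serves as a consistency check.

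Finally, I would write $\ln D_n(t)=\ln D_n(0)+\ln\big(D_n(t)/D_n(0)\big)$, insert the $t=0$ expansion (\ref{dn0b}), and collect terms of like order in $n$ to obtain (\ref{Dnzhangk2}); for instance the $\tfrac{16t^3-315}{29160\,n^2}$ term there arises by combining the $n^{-2}$ contribution of the integral with the $-\tfrac{7}{648\,n^2}$ term of (\ref{dn0b}).

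The main obstacle is bookkeeping rather than anything conceptual. Compared with the $m=2$ case the integrand involves five consecutive recurrence coefficients instead of three, and reaching the $O(n^{-8/3})$ error in $\ln D_n(t)$ forces one to carry (\ref{betanzhangk2}) to the order displayed there (through $n^{-13/3}$) and to track carefully both how the integer shifts $n\mapsto n\pm1,\,n\pm2$ perturb each term and how the various third-integer powers of $n$ recombine under integration. A symbolic algebra package makes this routine, but the alignment of orders has to be handled with care; one should also check that the expansion of the integrand is genuinely uniform in $s$ on the bounded interval, which follows from the uniformity on compact $s$-intervals of the expansion (\ref{betanzhangk2}).
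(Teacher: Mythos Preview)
Your proposal is correct and follows essentially the same route as the paper: differentiate $\ln D_n(t)$ via (\ref{dnyuhj}) and (\ref{eq1}), use (\ref{s7}) together with (\ref{rn3}) and (\ref{Rn3}) to rewrite the sum $\sum_{j=0}^{n-1}(\beta_j+\beta_{j+1})$ in terms of $\beta_{n-2},\dots,\beta_{n+2}$, integrate from $0$ to $t$, substitute (\ref{betanzhangk2}), and add the result to (\ref{dn0b}). One small slip: the integrand does \emph{not} vanish at $s=0$ (it equals $\sum(\beta_j(0)+\beta_{j+1}(0))$); the reason no $t$-independent constant appears is simply that the definite integral from $0$ to $t$ vanishes at $t=0$, which is the point your parenthetical ``$D_n(0)/D_n(0)=1$'' already captures.
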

\begin{proof}
From (\ref{dnyuhj}) and (\ref{eq1}), we have
\be\label{e1}
\frac{d}{d t} \ln D_n(t)=\sum_{j=0}^{n-1}\left(\beta_j+\beta_{j+1}\right).
\ee
Substituting (\ref{rn3}) and (\ref{Rn3}) into (\ref{s7}) gives
\bea\label{e2}
\sum_{j=0}^{n-1}\left(\beta_j+\beta_{j+1}\right)&=&2\bt_n\big[3\bt_n^2(\beta_{n-1}+\beta_n+\beta_{n+1})+3(\beta_n+\beta_{n+1})\beta_{n-1}(\beta_{n-2}+\beta_{n-1}+\beta_{n})\no\\
&&+3(\beta_{n-1}+\beta_{n})\beta_{n+1}(\beta_{n}+\beta_{n+1}+\beta_{n+2})-t\bt_n\big].
\eea
It follows from the combination of (\ref{e1}) and (\ref{e2}) that
\bea
\ln \frac{D_n(t)}{D_n(0)}&=&2\int_0^t\bt_n(s)\big[3\bt_n^2(s)\left(\beta_{n-1}(s)+\beta_n(s)+\beta_{n+1}(s)\right)\no\\
&&+3\left(\beta_n(s)+\beta_{n+1}(s)\right)\beta_{n-1}(s)\left(\beta_{n-2}(s)+\beta_{n-1}(s)+\beta_{n}(s)\right)\no\\
&&+3\left(\beta_{n-1}(s)+\beta_{n}(s)\right)\beta_{n+1}(s)\left(\beta_{n}(s)+\beta_{n+1}(s)+\beta_{n+2}(s)\right)-s\bt_n(s)\big]ds.\no
\eea
Substituting (\ref{betanzhangk2}) into the above and taking a large $n$ limit, we obtain
\be\label{ra2}
\ln \frac{D_n(t)}{D_n(0)}
=\frac{3\:  n^{4/3}t}{2 \sqrt[3]{60}}+\frac{ n^{2/3}t^2}{2 \sqrt[3]{450}}+\frac{t^3}{135}+\frac{t(t^3-90)}{810 \sqrt[3]{60}\: n^{2/3}}
-\frac{t^5}{6075 \sqrt[3]{450}\: n^{4/3}}+\frac{2 t^3}{3645 n^2}+O\big(n^{-8/3}\big).
\ee
The combination of (\ref{ra2}) and (\ref{dn0b}) gives the result (\ref{Dnzhangk2}).
\end{proof}
\begin{corollary}
The quantity $h_n(t)$ has the large $n$ asymptotic expansion
\begin{align}
\ln h_n(t)=&\:\frac{n \ln n}{3} -\frac{1}{3}(1+\ln 60) n+\frac{2\:n^{1/3}t}{\sqrt[3]{60}}+\frac{\ln n}{6}  +\ln(2\pi)-\frac{\ln 60}{6}+\frac{t^2}{3\sqrt[3]{450}\:n^{1/3}}\no\\[8pt]
&+\frac{t}{3\sqrt[3]{60}\:n^{2/3}}-\frac{1}{36n}-\frac{t^2}{18\sqrt[3]{450}\:n^{4/3}}-\frac{t^4}{1215\sqrt[3]{60}\:n^{5/3}}+O(n^{-2}).\no
\end{align}
\end{corollary}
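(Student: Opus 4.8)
The plan is to use the product representation (\ref{dnyuhj}): since $D_n(t)=\prod_{j=0}^{n-1}h_j(t)$, one has the telescoping identity $\ln h_n(t)=\ln D_{n+1}(t)-\ln D_n(t)$, exactly as in the proof of the corresponding corollary of Section~2. Thus no new analytic input is required --- one substitutes the expansion (\ref{Dnzhangk2}) of Theorem~\ref{thm5}, once with argument $n+1$ and once with argument $n$, subtracts, and re-expands the difference in descending powers of $n$.

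To carry this out I would expand each $n$-dependent block of (\ref{Dnzhangk2}) at the shifted point by means of $(n+1)^{a}=n^{a}\sum_{k\ge 0}\binom{a}{k}n^{-k}$ and $\ln(n+1)=\ln n+\sum_{k\ge 1}\tfrac{(-1)^{k+1}}{k}n^{-k}$, keeping enough terms in each sub-expansion to reach the stated accuracy $O(n^{-2})$; there is room to spare, since the remainder in (\ref{Dnzhangk2}) is already $O(n^{-8/3})$ and shifting the argument only improves it. For the leading part, $\tfrac16\bigl[(n+1)^2\ln(n+1)-n^2\ln n\bigr]=\tfrac{n\ln n}{3}+\tfrac{\ln n}{6}+\tfrac n6+\tfrac14+\tfrac{1}{18n}+O(n^{-2})$ and $-\bigl(\tfrac14+\tfrac{\ln 60}{6}\bigr)\bigl[(n+1)^2-n^2\bigr]=-\bigl(\tfrac12+\tfrac{\ln 60}{3}\bigr)n-\tfrac14-\tfrac{\ln 60}{6}$, so these two blocks already produce $\tfrac{n\ln n}{3}-\tfrac13(1+\ln 60)n+\tfrac{\ln n}{6}-\tfrac{\ln 60}{6}$; the block $n\ln(2\pi)$ leaves $\ln(2\pi)$, the block $-\tfrac{\ln n}{12}$ contributes $-\tfrac{1}{12n}+O(n^{-2})$, and the two $n$-independent constants $\zeta'(-1)-\tfrac{\ln 3}{12}$ cancel in the difference; assembling the $n^{-1}$ pieces ($\tfrac{1}{18}-\tfrac{1}{12}=-\tfrac{1}{36}$) gives the $-\tfrac{1}{36n}$ term. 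The fractional-power blocks are handled the same way: $(n+1)^{4/3}-n^{4/3}=\tfrac43 n^{1/3}+\tfrac29 n^{-2/3}-\tfrac{4}{81}n^{-5/3}+\ldots$ turns $\tfrac{3t}{2\sqrt[3]{60}}n^{4/3}$ into $\tfrac{2t}{\sqrt[3]{60}}n^{1/3}+\tfrac{t}{3\sqrt[3]{60}}n^{-2/3}+\ldots$, while $(n+1)^{2/3}-n^{2/3}=\tfrac23 n^{-1/3}-\tfrac19 n^{-4/3}+\ldots$ turns $\tfrac{t^2}{2\sqrt[3]{450}}n^{2/3}$ into $\tfrac{t^2}{3\sqrt[3]{450}}n^{-1/3}-\tfrac{t^2}{18\sqrt[3]{450}}n^{-4/3}+\ldots$, and the $n^{-2/3}$ block of (\ref{Dnzhangk2}) produces the $-\tfrac{t^4}{1215\sqrt[3]{60}}n^{-5/3}$ term. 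Combining all contributions and simplifying the cube roots $\sqrt[3]{60},\sqrt[3]{450}$ yields the asserted expansion of $\ln h_n(t)$.

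The difficulty is bookkeeping rather than mathematics. Since the exponents in (\ref{Dnzhangk2}) run from $n^2\ln n$ down to $n^{-4/3}$, every binomial and logarithmic sub-expansion must be carried to a matching depth and one must take care to collect like powers of $n$ correctly across the blocks. The cleanest organization is to split the work into the three strands indexed by the power of $t$ modulo $3$ --- the cube-root-free, integer-$n$-power strand ($n^2\ln n$, $n^2$, $n\ln 2\pi$, $\ln n$, constants, $n^{-2}$); the $\sqrt[3]{60}$ strand (the $t^1$ and $t^4$ terms); and the $\sqrt[3]{450}$ strand (the $t^2$ and $t^5$ terms) --- treating each strand independently, and to use as a consistency check the fact that several intermediate coefficients, such as that of $t\,n^{-5/3}$, vanish identically. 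A concluding symbolic verification is advisable. No input beyond Theorem~\ref{thm5} and (\ref{dnyuhj}) is needed.
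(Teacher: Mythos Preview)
Your approach is exactly the one the paper uses: write $\ln h_n(t)=\ln D_{n+1}(t)-\ln D_n(t)$ from (\ref{dnyuhj}), substitute the expansion (\ref{Dnzhangk2}) of Theorem~\ref{thm5}, and expand the difference for large $n$. Your intermediate computations (including the cancellation of the $t\,n^{-5/3}$ pieces coming from the $n^{4/3}$ and $n^{-2/3}$ blocks) are correct, so the proposal is sound.
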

\begin{proof}
Using the fact
$$
\ln h_n(t)=\ln D_{n+1}(t)-\ln D_n(t),
$$
and substituting (\ref{Dnzhangk2}) into the above, we obtain the desired result by taking a large $n$ limit.
\end{proof}

\section{Conclusions and Discussion}
In this paper, we have studied various large $n$ asymptotic expansions about the OPs with respect to the Freud weights (\ref{wei}). These are the asymptotics of the recurrence coefficients, the nontrivial leading coefficients of the monic OPs, the associated Hankel determinants and the squares of $L^2$-norm of the monic OPs. We mainly consider the $m=2$ and $m=3$ cases. It should be seen that our method can be applied to study the higher $m$ cases. Furthermore, we point out that we can compute the higher order terms of $n$ up to any degree for all the asymptotic expansions obtained in this paper.

\section*{Acknowledgments}
The work of Chao Min was partially supported by the National Natural Science Foundation of China under grant number 12001212, by the Fundamental Research Funds for the Central Universities under grant number ZQN-902 and by the Scientific Research Funds of Huaqiao University under grant number 17BS402. The work of
Yang Chen was partially supported by the Macau Science and Technology Development Fund under grant number FDCT 0079/2020/A2.

\section*{Conflict of Interest}
The authors have no competing interests to declare that are relevant to the content of this article.
\section*{Data Availability Statements}
Data sharing not applicable to this article as no datasets were generated or analysed during the current study.



\end{document}